\newtheorem{theorem}{Theorem}
\newtheorem{conjecture}{Conjecture}
\theoremstyle{remark}
\DeclareMathOperator{\tr}{tr}
\title{A Linear Lower Bound for the Square Energy of Graphs}
\author{Saieed Akbari \and Hitesh Kumar \and Bojan Mohar \and Shivaramakrishna Pragada}
\date{}
\begin{document}
\maketitle

\begin{abstract}
Let $G$ be a graph of order $n$ with eigenvalues $\lambda_1 \geq \cdots \geq\lambda_n$. Let 
\[s^+(G)=\sum_{\lambda_i>0} \lambda_i^2, \qquad  s^-(G)=\sum_{\lambda_i<0} \lambda_i^2.\] 
The smaller value, $s(G)=\min\{s^+(G), s^-(G)\}$ is called the \emph{square energy} of $G$. In 2016, Elphick, Farber, Goldberg and Wocjan conjectured that for every connected graph $G$ of order $n$,  $s(G)\geq n-1.$ No linear bound for $s(G)$ in terms of $n$ is known. 
Let $H_1, \ldots, H_k$ be disjoint vertex-induced subgraphs of $G$. In this note, we prove that 
\[s^+(G)\geq\sum_{i=1}^{k} s^+(H_i) \quad \text{ and } \quad s^-(G)\geq\sum_{i=1}^{k} s^-(H_i),\] which implies that $s(G)\geq \frac{3n}{4}$ for every connected graph $G$ of order $n\ge 4$.
\end{abstract}

\section{Introduction}

We use standard graph theory notation throughout the paper. All graphs are simple, i.e. with no loops or multiple edges. Let $G=(V, E)$ be a graph of order $n$ and size $m$. The \textit{adjacency matrix} of $G$ is an $n\times n$ matrix $A(G) = [a_{ij}]$, where $a_{ij} = 1$ if the vertices $v_i$ and $v_j$ are adjacent and $a_{ij} = 0$, otherwise. The \emph{eigenvalues} of $G$ are the eigenvalues of $A(G)$. Since $A(G)$ is a real symmetric matrix, all eigenvalues of $A(G)$ are real and can be listed as  
\[\lambda_1(G) \geq  \cdots \geq \lambda_n(G).\] 
Let $\mathcal{B}=\{x_1, \ldots , x_n\}$ be an orthonormal basis for $\mathbb{R}^n$ containing the eigenvectors of $A(G)$, where $x_i$ is the eigenvector corresponding to the eigenvalue $\lambda_i$ for $i=1, \ldots , n$. By the spectral decomposition (see \cite[Theorem 4.1.5]{Horn_Johnson_2013}), we have $A(G) = \sum_{i=1}^n \lambda_i x_ix_i^T$. Define
$$ A_{+}=\sum_{\lambda_i>0}\lambda_ix_ix_i^T,\,\, A_{-}=-\sum_{\lambda_i<0} \lambda_i  x_ix_i^T.$$
Both $A_{+}$ and $A_{-}$ are positive semidefinite matrices, and the following equalities hold: 
$$A_{+}A_{-}=A_{-}A_{+}=0, \,\,\,\, \, \, A(G)=A_{+}-A_{-}.$$
 
The \emph{energy} of a graph $G$, $\mathcal{E}(G)$, is defined to be the sum of absolute values of all eigenvalues of $G$, i.e. \[\mathcal{E}(G)=\sum_{i=1}^n|\lambda_i(G)|.\]
Define \[s^+(G)=\sum_{\lambda_i > 0}\lambda_i^2(G) \quad\text{ and }\quad s^-(G)=\sum_{\lambda_i < 0}\lambda_i^2(G).\] 
The parameters $s^+(G)$ and $s^-(G)$ are called the \emph{positive square energy} and the \emph{negative square energy} of $G$, respectively. Define $s(G)=\min\{s^+(G), s^-(G)\}$ and call it the \emph{square energy} of $G$. Clearly, $s^+(G)=\tr((A_{+})^2)$ and $s^-(G)=\tr((A_{-})^2)$.

Based on the fact that $s(G)=|E(G)|$ for every bipartite graph, Elphick, Farber, Goldberg and Wocjan \cite{Elphick_FGW_2016} proposed the following conjecture. 
\begin{conjecture}[\cite{Elphick_FGW_2016}] \label{conj:s_plus_main}
For every connected graph $G$ of order $n$, 
    \[ s(G)\ge n-1.\]
\end{conjecture}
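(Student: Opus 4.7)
The plan is to attack the conjecture by reducing to the bipartite case and then handling non-bipartite connected graphs via induction on $n$, using the subgraph decomposition lemma of the abstract as the main combinatorial tool.

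First, I would dispose of the bipartite case. For any bipartite graph the spectrum is symmetric about zero, so $s^+(G)=s^-(G)$; since $s^+(G)+s^-(G)=\tr(A(G)^2)=2|E(G)|$, we obtain $s^+(G)=s^-(G)=|E(G)|$. Connectedness forces $|E(G)|\ge n-1$, yielding $s(G)\ge n-1$ with equality precisely for trees. This is consistent with the extremal examples $K_{1,n-1}$ and $K_n$, both of which satisfy $s=n-1$.

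For non-bipartite connected $G$ the plan is induction on $n$ via the subgraph lemma. The cleanest inductive step picks a non-cut vertex $v$ (which exists, e.g.\ as a leaf of any spanning tree); if $G-v$ is connected and non-bipartite, the inductive hypothesis yields $s^+(G-v)\ge n-2$ and $s^-(G-v)\ge n-2$, and the lemma applied to the single piece $H_1=G-v$ gives $s^+(G)\ge n-2$ and $s^-(G)\ge n-2$, which is one short of the target. To recover the missing unit I would search for a small induced subgraph $H\subseteq G$ on $k$ vertices with $\min\{s^+(H),s^-(H)\}\ge k-1$ and with $G-V(H)$ still connected; applying the lemma to the pair $\{G-V(H),H\}$ together with induction would then close the gap. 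Natural candidates for $H$ are induced cliques $K_k$ (where $s^-(K_k)=k-1$ exactly), induced $4$-cycles (where $s^+(C_4)=s^-(C_4)=4\ge k-1$), and induced complete bipartite subgraphs $K_{a,b}$ (where $s^+=s^-=ab\ge a+b-1$). A side issue to dispatch along the way is the possibility that $v$ must be chosen more carefully to preserve non-bipartiteness of $G-v$; failing that, one switches back into the (now easy) bipartite regime after the deletion and concludes directly.

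The main obstacle is precisely guaranteeing the existence of such an induced subgraph with a connected complement. When $G$ is sparse, triangle-poor, and $C_4$-free, the lemma only offers pieces like $P_3$ or $P_4$, whose ratios $s^+/|V|$ saturate at $2/3$ and $3/4$; this is exactly the source of the paper's $3n/4$ bound and is where the naive induction stalls at $n-O(1)$. A complete proof would likely require a quantitative spectral tool beyond the subgraph lemma, for instance an inequality of the form $s^+(G)-s^+(G-v)\ge f(x_1(v))$ (and symmetrically for $s^-$), where $x_1$ is the Perron eigenvector and $f$ is a suitable positive function, so that each deletion step gains strictly more than the naive interlacing bound. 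Producing such an inequality that is sharp in the right extremal cases ($K_n$, stars, and their sparse non-bipartite cousins) is where I expect the bulk of the real work to lie; in the absence of such a tool, the induction halts at $n-O(1)$, which is presumably why the conjecture has so far resisted proof.
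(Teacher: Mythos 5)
There is a fundamental mismatch here: the statement you are asked to prove is Conjecture~\ref{conj:s_plus_main}, which is an \emph{open conjecture}. The paper does not prove it either; its main result is only the weaker linear bound $s(G)\ge \frac{3n}{4}$ for $n\ge 4$, obtained via Theorem~\ref{thm:square_energy_partition} and an induction quite similar in spirit to the one you sketch. Your proposal is therefore not a proof, and you say as much yourself in the final paragraph: the induction ``halts at $n-O(1)$'' absent a new quantitative spectral tool, which you do not supply. A plan that ends with ``the bulk of the real work'' still to be done is a gap by definition, so the honest verdict is that the statement remains unproved by your argument.

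Two concrete points about the sketch itself. First, the bipartite case is handled correctly ($s^+=s^-=|E(G)|\ge n-1$), and this is exactly the observation that motivated the conjecture in \cite{Elphick_FGW_2016}. Second, your proposed repair of the non-bipartite inductive step does not add up even where you claim candidates exist: if you split $G$ into two connected induced pieces $H$ and $G-V(H)$ each merely \emph{satisfying} the conjecture, Theorem~\ref{thm:square_energy_partition} yields only $s(G)\ge (|V(H)|-1)+(n-|V(H)|-1)=n-2$; you lose one unit per piece. To close the gap you need a piece $H$ with $s(H)\ge |V(H)|$, i.e.\ one that \emph{strictly beats} the conjectured bound. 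Your candidate $C_4$ qualifies ($s=4=|V|$) and $K_{a,b}$ with $a,b\ge 2$ qualifies ($ab\ge a+b$), but the clique $K_k$ does not, since $s(K_k)=s^-(K_k)=k-1$ exactly, so that candidate buys you nothing. The hard residual case --- sparse, triangle-rich but $C_4$-free graphs where no such surplus piece with connected complement exists --- is precisely where the paper settles for $\frac{3n}{4}$ by a careful analysis of a spanning tree rooted at a maximum-degree vertex and of the case where $G-v$ is a disjoint union of $K_3$, $P_3$, $K_2$ and $K_1$ components, combined with eigenvalue interlacing. Your diagnosis of where the difficulty lies is accurate and matches the authors' own assessment, but diagnosis is not proof.
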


The above conjecture has been verified for several graph classes, which include regular graphs, but the general case is wide open (see \cite{Elphick_Linz_2024, Aida_2023, Liu_Ning_OpenProblems_2023} for partial results). The best-known general lower bound for $s(G)$ is $\sqrt{n}$ as observed by Elphick and Linz \cite{Elphick_Linz_2024}, and is a consequence of a result on the chromatic number of graphs by Ando and Lin \cite{Ando_Lin_2015}. In this paper, our main result is a linear lower bound for the square energy of connected graphs. In particular, we show that for any connected graph $G$ of order $n\ge 4$, $s(G)\ge \frac{3n}{4}$. 

We believe the above bound can be improved to $\frac{4n}{5}$ using more intricate partitioning of the graph $G$ and applying Theorem \ref{thm:square_energy_partition}. We avoid doing this and content ourselves with the slightly weaker $\frac{3n}{4}$ bound because we believe more ideas are needed to resolve Conjecture \ref{conj:s_plus_main}.

\section{Main Results}

An important result concerning the energy of graphs is the following.
\begin{theorem}[\cite{Akbari_2009}]\label{thm:energy_partition_bound}
Let $H_1, \ldots, H_k$ be vertex-disjoint induced subgraphs of a graph $G$. Then 
\[ \mathcal{E}(G)\ge \sum_{i=1}^k \mathcal{E}(H_i).\]
\end{theorem}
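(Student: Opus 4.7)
The plan is to view $\mathcal{E}(G)=\sum_i|\lambda_i(G)|$ as the Schatten $1$-norm $\|A(G)\|_{*}$ of the adjacency matrix and to invoke its dual description
\[
\|A(G)\|_{*}\;=\;\max\bigl\{\tr(A(G)B):B=B^{T},\ \|B\|_{\mathrm{op}}\le 1\bigr\},
\]
which is von Neumann's trace inequality specialised to the Schatten $1$- and $\infty$-norms, with the maximum attained by the matrix $\operatorname{sign}(A(G))$ (positive eigenvalues sent to $+1$, negative to $-1$). The strategy is then to produce a single admissible competitor $B$, stitched together from the sign matrices of the pieces $A(H_i)$, whose inner product with $A(G)$ equals $\sum_i\mathcal{E}(H_i)$ on the nose.

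To carry this out, for each $i$ let $S_i$ be the $n\times n$ symmetric matrix obtained by placing $\operatorname{sign}(A(H_i))$ in the principal block indexed by $V(H_i)$ and zeros elsewhere, so $\|S_i\|_{\mathrm{op}}\le 1$. Because $H_i$ is \emph{induced}, the $V(H_i)\times V(H_i)$ block of $A(G)$ agrees with $A(H_i)$, and because $S_i$ has vanishing rows and columns off $V(H_i)$, every edge of $G$ leaving $V(H_i)$ is killed in the trace. Thus
\[
\tr\bigl(A(G)\,S_i\bigr)\;=\;\tr\bigl(A(H_i)\operatorname{sign}(A(H_i))\bigr)\;=\;\mathcal{E}(H_i).
\]

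Finally set $S:=S_1+\cdots+S_k$. Pairwise disjointness of the vertex sets $V(H_i)$ means that, after a single reordering of coordinates, $S$ is block diagonal with the $S_i$ on the diagonal, so its spectrum is the multiset union of the spectra of the $S_i$; in particular $\|S\|_{\mathrm{op}}=\max_i\|S_i\|_{\mathrm{op}}\le 1$. Substituting $B=S$ in the variational formula yields
\[
\mathcal{E}(G)\;\ge\;\tr(A(G)\,S)\;=\;\sum_{i=1}^{k}\tr(A(G)\,S_i)\;=\;\sum_{i=1}^{k}\mathcal{E}(H_i),
\]
as desired. The only step that is not a one-line check is the duality $\|\cdot\|_{*}=\|\cdot\|_{\mathrm{op}}^{*}$, which is standard but not trivial; conceptually, the key observation powering everything is that the induced-subgraph hypothesis plus the disjointness of the $V(H_i)$ is exactly what is needed to fuse the individual sign matrices into one contraction whose pairing with $A(G)$ decouples additively across the blocks. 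The same template will also explain why the analogous argument in the Schatten-$2$ / square-energy setting (via $B\succeq 0$, $\|B\|_F\le 1$) should go through, which is presumably the direction the authors now pursue.
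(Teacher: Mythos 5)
Your argument is correct, but there is nothing in the paper to compare it against directly: the paper only quotes this theorem from \cite{Akbari_2009} and gives no proof, so any proof you supply is by definition a ``different route.'' Your route is a clean one. Note that you only need the easy (H\"older/von Neumann) direction of the duality, namely $\tr(A(G)B)\le \mathcal{E}(G)$ whenever $\|B\|_{\mathrm{op}}\le 1$, which follows in one line from the spectral decomposition $A(G)=\sum_i\lambda_i x_ix_i^T$ and $|x_i^TBx_i|\le 1$; the attainment of the maximum by $\operatorname{sign}(A(G))$ is never used. The block computation $\tr(A(G)S_i)=\tr\bigl(A(H_i)\operatorname{sign}(A(H_i))\bigr)=\mathcal{E}(H_i)$ and the bound $\|S_1+\cdots+S_k\|_{\mathrm{op}}\le 1$ for the block-diagonal sum are both verified correctly, so the inequality follows; in fact you have proved the stronger linear-algebra statement that the trace norm of a symmetric matrix dominates the sum of the trace norms of any family of principal submatrices on disjoint index sets, with ``induced'' only needed to identify those submatrices with the $A(H_i)$. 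By contrast, the paper's technique for its own analogous result (Theorem \ref{thm:square_energy_partition}) is primal rather than dual: it partitions $A_{+}$ and $A_{-}$ into blocks, uses positive semidefiniteness of the diagonal blocks, and compares eigenvalues of $B_{ii}=A_{ii}+C_{ii}$ with those of $A_{ii}$; that route also yields the equality characterization, which your variational argument does not immediately give. Your closing remark is also sound: since $\sqrt{s^+(G)}=\max\{\tr(A(G)B): B\succeq 0,\ \|B\|_F\le 1\}$ (take $B=A_{+}/\|A_{+}\|_F$ for attainment, and use $\tr(A_{-}B)\ge 0$ plus Cauchy--Schwarz for the upper bound), plugging in the normalized block-diagonal sum of the matrices $A_{+}(H_i)$ reproduces the inequality of Theorem \ref{thm:square_energy_partition}, giving an alternative dual proof of the paper's key lemma, though again without the equality statement.
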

We prove a similar result for the square energy of graphs.
\begin{theorem}\label{thm:square_energy_partition}
Let $H_1, \ldots, H_k$ be disjoint vertex induced subgraphs of a graph $G$. Then 
$$ s^+(G)\geq \sum_{i=1}^{k} s^+(H_i)\ \text{ and }\  s^-(G)\geq \sum_{i=1}^{k} s^-(H_i),$$
and equality holds in both simultaneously if and only if $G$ is the disjoint union of $H_1, \ldots, H_k$. 
\end{theorem}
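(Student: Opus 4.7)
The plan is to restrict the spectral decomposition of $A(G)$ to each $H_i$ and analyze the result block by block. Writing $A(G)=A_{+}-A_{-}$ as in the introduction and letting $P_i$ be the $n\times|V(H_i)|$ inclusion matrix with $P_i^{T}A(G)P_i=A(H_i)$, we obtain
\[
A(H_i)\;=\;P_i^{T}A_{+}P_i\;-\;P_i^{T}A_{-}P_i,
\]
which expresses $A(H_i)$ as a difference of two positive semidefinite matrices, namely the principal submatrices of $A_{+}$ and $A_{-}$ indexed by $V(H_i)$.

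The auxiliary lemma I would prove first is: whenever a symmetric matrix $A$ is written as $A=X-Y$ with $X,Y\succeq 0$, then $\tr(X^{2})\ge s^+(A)$ and $\tr(Y^{2})\ge s^-(A)$. This is a double application of Weyl's monotonicity. Since $Y\succeq 0$, we have $X=A+Y\succeq A$, so $\lambda_j(X)\ge\lambda_j(A)$ in decreasing order; and $X\succeq 0$ gives $\lambda_j(X)\ge 0$. Combining these, $\lambda_j(X)\ge\max(\lambda_j(A),0)$ for every $j$, and squaring and summing yields $\tr(X^{2})\ge\sum_j\max(\lambda_j(A),0)^{2}=s^+(A)$, with the analogous bound for $Y$.

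Applying this lemma to each $A(H_i)$ and summing,
\[
\sum_{i=1}^{k}s^+(H_i)\;\le\;\sum_{i=1}^{k}\tr\bigl((P_i^{T}A_{+}P_i)^{2}\bigr)\;=\;\sum_{i=1}^{k}\sum_{u,v\in V(H_i)}[A_{+}]_{uv}^{2}\;\le\;\sum_{u,v}[A_{+}]_{uv}^{2}\;=\;\tr(A_{+}^{2})\;=\;s^+(G),
\]
where the last inequality uses pairwise disjointness of the $V(H_i)$ so that each ordered pair $(u,v)$ contributes to at most one block. The bound for $s^-(G)$ is obtained verbatim with $A_{-}$ in place of $A_{+}$.

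For the equality characterization I would use the identity $s^+(F)+s^-(F)=\tr(A(F)^{2})=2|E(F)|$ valid for every graph $F$. If both inequalities in the theorem are equalities, summing yields $|E(G)|=\sum_{i=1}^{k}|E(H_i)|$; since the $E(H_i)$ are pairwise disjoint subsets of $E(G)$, every edge of $G$ must lie inside some $H_i$, so $G$ is the disjoint union of $H_1,\ldots,H_k$ (up to isolated vertices if $\bigcup_i V(H_i)\subsetneq V(G)$). The converse is immediate: under the disjoint-union hypothesis $A(G)$ is block diagonal with blocks $A(H_i)$, and the spectra add.

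The main obstacle is the auxiliary lemma on decompositions $A=X-Y$; once that is in hand, the rest is bookkeeping based on the sum-of-squared-entries identity $\tr(M^2)=\sum_{u,v} M_{uv}^2$ for symmetric $M$. Since that lemma reduces cleanly to Weyl monotonicity applied to $X\succeq A$ and $X\succeq 0$, I do not anticipate any serious difficulty in executing the plan.
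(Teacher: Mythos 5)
Your proposal is correct, and the heart of it coincides with the paper's argument: your auxiliary lemma (if $A=X-Y$ with $X,Y\succeq 0$ then $\tr(X^2)\ge s^+(A)$, via Weyl monotonicity applied to $X=A+Y\succeq A$ together with $X\succeq 0$) is exactly the paper's step that the diagonal blocks $B_{ii}$ of $A_+$ satisfy $B_{ii}=A(H_i)+C_{ii}\succeq A(H_i)$, hence $\tr(B_{ii}^2)\ge s^+(H_i)$. Where you differ is in the bookkeeping and in the equality case. For the inequality, the paper reduces to $k=2$ with a conformal block partition and discards the off-diagonal contribution $2\tr(B_{12}B_{12}^T)\ge 0$; you instead handle arbitrary $k$ in one pass by writing $\tr((P_i^TA_+P_i)^2)$ as the sum of squared entries of $A_+$ over the block $V(H_i)\times V(H_i)$ and bounding by the full Frobenius norm $\tr(A_+^2)$ — the same mechanism, slightly more streamlined, and it transparently covers the case $\bigcup_i V(H_i)\subsetneq V(G)$. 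For the equality characterization you take a genuinely different route: summing the two equalities and using $s^+(F)+s^-(F)=2|E(F)|$ to force $|E(G)|=\sum_i|E(H_i)|$, whereas the paper argues that equality forces $\tr(B_{12}B_{12}^T)=\tr(C_{12}C_{12}^T)=0$, hence $B_{12}=C_{12}=0$ and so $A_{12}=0$. Your edge-counting argument is shorter and avoids any matrix analysis, and your parenthetical caveat is apt: if $\bigcup_i V(H_i)\subsetneq V(G)$, equality can hold with the leftover vertices isolated, so the ``only if'' direction as literally stated presupposes that the $H_i$ partition $V(G)$ (which is what the paper implicitly assumes when it reduces to a partition into two parts). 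Both routes are sound; yours buys a uniform treatment of all $k$ and a cleaner equality case, while the paper's block computation makes the source of the slack ($2\tr(B_{12}B_{12}^T)$) explicit.
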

\begin{proof}
It is sufficient to prove the assertion when $G$ is partitioned into two disjoint vertex induced subgraphs $H_1$ and $H_2$. So let 
$A(G) = \begin{bmatrix}
A_{11} & A_{12}\\ A_{21} & A_{22} \end{bmatrix}$, where $A_{11}$ and $A_{22}$ are the adjacency matrices of the induced subgraphs $H_1$ and $H_2$, respectively. We show that $$ s^+(G)\geq s^+(H_1) + s^+(H_2).$$
If we apply this inequality to $-A$, we get the second inequality. 

Let $A_{+}=[B_{ij}]$ and $A_{-}=[C_{ij}]$, $1\leq i,j\leq 2$, partitioned conformally as $A(G)$. We have $A_{ii}=B_{ii}-{C_{ii}}$, for $i=1,2$. Since $A_{+}$ and $A_{-}$ are positive semidefinite matrices,
both $B_{ii}$ and $C_{ii}$ are also positive semidefinite for $i=1,2$ (see \cite[Theorem 7.7.7]{Horn_Johnson_2013}). Now, we have
\[s^+(G)= \tr((A_{+})^2)  =\tr(B_{11}^2) + \tr(B_{22}^2) + 2\tr(B_{12}B_{12}^T).\]
Since $B_{12}B_{12}^T$ is a positive semidefinite matrix, we have 
\[ \tr(B_{12}B_{12}^T)\ge 0.\]
 Since $B_{ii}=A_{ii}+C_{ii}$ and $C_{ii}$ is a positive semidefinite matrix, $\lambda_r(B_{ii}) \geq \lambda_r(A_{ii})$ for $1\le r \le p_i$, where $p_i$ is the number of positive eigenvalues of $A_{ii}$ for $i=1,2$. This implies
 \[ s^+(G) \geq \tr(B_{11}^2) + \tr(B_{22}^2) \ge s^+(H_1) + s^+(H_2).\]

Note that if equality holds simultaneously then $\tr(B_{12}B_{12}^T)=0=\tr(C_{12}C_{12}^T)$ which implies $B_{12}=0=C_{12}$ and so $B_{21}=B_{12}^T=0=C_{12}^T=C_{21}$. Hence, $H_1$ and $H_2$ are disjoint. Conversely, if $G$ is the disjoint union of $H_1$ and $H_2$, then equality clearly holds. The proof is complete. 
\end{proof}

We recall the following well-known fact (cf. \cite[Theorem 4.3.17]{Horn_Johnson_2013}).

\begin{theorem}[Interlacing Theorem]\label{thm:Interlacing}
Let $A$ be a real symmetric matrix of order $n$. Let $B$ be a principal submatrix of $A$ of order $n-1$. Then, for $1\le i \le n-1$,
\[\lambda_i(A) \ge \lambda_i(B) \ge \lambda_{i+1}(A).\]
\end{theorem}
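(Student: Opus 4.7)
The plan is to deduce the interlacing from the Courant--Fischer min-max characterization of eigenvalues of real symmetric matrices. Recall that for an $m\times m$ real symmetric matrix $M$ with eigenvalues $\mu_1 \ge \cdots \ge \mu_m$, one has the two dual expressions
\[ \mu_i(M) = \max_{\substack{S\subseteq\mathbb{R}^m\\ \dim S=i}}\ \min_{0\ne x\in S}\frac{x^T M x}{x^T x} = \min_{\substack{S\subseteq\mathbb{R}^m\\ \dim S=m-i+1}}\ \max_{0\ne x\in S}\frac{x^T M x}{x^T x}. \]
I would take these as a black box. Assume without loss of generality that $B$ is obtained by deleting the $k$-th row and column of $A$, and let $W=\{x\in\mathbb{R}^n : x_k=0\}$. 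Identifying $W$ with $\mathbb{R}^{n-1}$ in the natural way, one has $x^T A x = x^T B x$ for every $x\in W$, so any subspace of $\mathbb{R}^{n-1}$ lifts to an equidimensional subspace of $\mathbb{R}^n$ that lies inside $W$ and preserves Rayleigh quotients. This is the only bridge between the two matrices that is needed.

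For the upper bound $\lambda_i(A)\ge \lambda_i(B)$, I would choose an $i$-dimensional subspace $T\subseteq\mathbb{R}^{n-1}$ realizing the maximum in the first Courant--Fischer expression applied to $B$. Viewed inside $\mathbb{R}^n$ it is still $i$-dimensional, so the max-min formula for $A$ yields
\[ \lambda_i(A) \ge \min_{0\ne x\in T} \frac{x^T A x}{x^T x} = \min_{0\ne x\in T} \frac{x^T B x}{x^T x} = \lambda_i(B). \]

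For the companion bound $\lambda_i(B)\ge \lambda_{i+1}(A)$, I would switch to the dual min-max formula. An optimal subspace $U\subseteq\mathbb{R}^{n-1}$ for $\lambda_i(B)$ in this form has dimension $(n-1)-i+1 = n-i$, which is exactly $n-(i+1)+1$, the dimension needed to upper bound $\lambda_{i+1}(A)$. Lifting $U$ into $W\subseteq\mathbb{R}^n$ gives
\[ \lambda_{i+1}(A) \le \max_{0\ne x\in U} \frac{x^T A x}{x^T x} = \max_{0\ne x\in U} \frac{x^T B x}{x^T x} = \lambda_i(B). \]

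There is no substantive obstacle here; once Courant--Fischer is granted, the proof is essentially dimension bookkeeping, with the one point of care being to pair the max-min characterization with the min-max one so that the inequality lands on $\lambda_i(B)$ rather than $\lambda_{i\pm 1}(B)$. An alternative approach would be a Weyl-type perturbation argument exploiting that, after a permutation, $A$ differs from $\diag(B,0)$ by a symmetric matrix of rank at most two, but the Courant--Fischer route is shorter and more transparent.
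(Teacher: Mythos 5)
Your proof is correct. Note, however, that the paper does not prove this statement at all: it is quoted as a well-known fact with a citation to Horn and Johnson (Theorem 4.3.17), and the interlacing inequality is then used as a black box in the proof of the $\frac{3n}{4}$ bound. Your argument via the two dual Courant--Fischer characterizations is the standard textbook proof, and the details check out: restricting to the coordinate hyperplane $W=\{x : x_k=0\}$ does give $x^TAx=x^TBx$ for $x\in W$, lifting an $i$-dimensional optimal subspace for $B$ into $\mathbb{R}^n$ yields $\lambda_i(A)\ge\lambda_i(B)$ from the max-min form, and the dimension count $(n-1)-i+1=n-(i+1)+1$ correctly pairs the min-max form for $B$ with that for $A$ to give $\lambda_i(B)\ge\lambda_{i+1}(A)$. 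So relative to the paper you have supplied a proof where the authors deliberately rely on a reference; this is fine, and the only caveat is that if one is not taking Courant--Fischer as granted, the argument is no longer self-contained --- but that is exactly the level of granularity at which the paper itself cites the literature.
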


We now give a linear lower bound for the square energy of connected graphs using Theorem \ref{thm:square_energy_partition}.

\begin{theorem}
For any connected graph $G$ of order $n\ge 4$,
\[ s(G)\ge \frac{3n}{4}.\] 
\end{theorem}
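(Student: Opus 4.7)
The plan is to argue by induction on $n$, using Theorem~\ref{thm:square_energy_partition} to split $G$ into an induced 4-vertex piece and the components of what remains. As a preliminary, the bipartite case is immediate: for connected bipartite $G$ one has $s(G)=|E(G)|\ge n-1\ge \tfrac{3n}{4}$ whenever $n\ge 4$, so I may assume $G$ is non-bipartite when convenient. The base cases for the induction are $n\in\{4,5,6,7\}$, handled by a short case analysis; for $n=4$, for instance, one checks directly that each of the six connected 4-vertex graphs---$P_4$, $C_4$, $K_{1,3}$, paw, diamond, $K_4$---satisfies $s(G)\ge 3$.

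For the inductive step, fix $n\ge 8$ and assume the bound for all connected graphs of order $k$ with $4\le k<n$. I would find an induced subgraph $H$ on exactly four vertices with $G[V(H)]$ connected, such that every connected component of $G-V(H)$ has at least four vertices. Given such an $H$, Theorem~\ref{thm:square_energy_partition} applied to the partition $\{V(H)\}\cup\{V(C_i)\}_i$ (where the $C_i$ are the components of $G-V(H)$), combined with the inductive hypothesis on each $C_i$ and the $n=4$ verification on $G[V(H)]$, gives
\[
s^{+}(G)\ \ge\ s^{+}(G[V(H)])+\sum_i s^{+}(G[V(C_i)])\ \ge\ 3+\tfrac{3(n-4)}{4}\ =\ \tfrac{3n}{4},
\]
and the analogous inequality for $s^{-}$, from which $s(G)\ge \tfrac{3n}{4}$ follows.

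The main obstacle is proving the existence of such an $H$. The natural approach is via a spanning tree $T$ of $G$: root $T$ at an arbitrary vertex, extract a pendant subtree $T_v$ of order exactly four, and set $V(H)=V(T_v)$; then $G[V(H)]$ is connected because it contains $T_v$, and $G-V(H)\supseteq T-V(T_v)$ is connected of order $n-4\ge 4$. This strategy succeeds for most spanning trees, but it can fail for highly star-like trees whose only subtrees have order $1$ or $n$. In those pathological cases $G$ itself must be close to a star $K_{1,n-1}$, and a direct application of Theorem~\ref{thm:square_energy_partition} with a large induced bipartite subgraph (typically $K_{1,n-2}$, obtained by deleting a carefully chosen vertex) yields $s(G)\ge n-2\ge \tfrac{3n}{4}$ for $n\ge 8$, closing the loop. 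Navigating this dichotomy between tree-like and near-star structures is where I expect the technical effort to concentrate.
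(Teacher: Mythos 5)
Your overall skeleton (induction via Theorem~\ref{thm:square_energy_partition}, bipartite case for free, small base cases) matches the paper's, but the crucial step --- the existence of a connected induced $4$-set $H$ such that every component of $G-V(H)$ has order at least $4$, together with your fallback when it fails --- has a genuine gap. The pathological cases are not ``close to a star $K_{1,n-1}$'' in any way your fallback can use. Take the friendship (windmill) graph: a vertex $v$ plus $k$ triangles through $v$, $n=2k+1$. Every connected $4$-set must contain $v$, and deleting it always leaves components of order $1$ or $2$, so your $H$ never exists; yet the graph has no induced star larger than $K_{1,k}$ (any larger induced subgraph containing $v$ picks up a triangle), so there is no induced $K_{1,n-2}$ and the bound $s(G)\ge n-2$ you invoke is unavailable. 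Worse, for this graph \emph{no} application of Theorem~\ref{thm:square_energy_partition} alone can reach $\tfrac{3n}{4}$: every induced subgraph avoiding $v$ is a union of edges and isolated vertices, and every induced subgraph containing $v$ together with the leftover pieces yields $\sum_i s(H_i)$ of order roughly $\tfrac{n}{2}$. So the hard case genuinely requires a tool beyond the partition theorem, not just a more careful choice of parts. The same obstruction arises for the whole family of graphs in which some vertex $v$ has the property that every component of $G-v$ has at most $3$ vertices (windmills, books, etc.).

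This is exactly where the paper's proof diverges from yours: after reducing (via a spanning tree rooted at a maximum-degree vertex, and a separate argument when $G-v$ contains a $P_4$) to the case where $G-v$ is a disjoint union of $K_3$, $P_3$, $K_2$, $K_1$, it abandons partitioning and instead uses eigenvalue interlacing: $G$ contains a large induced star, so $\min\{\lambda_1(G),|\lambda_n(G)|\}\ge\sqrt{\ell_1+\ell_2+\ell_3+\ell_4}$, and combining this with the explicitly known spectrum of $G-v$ gives $s^{\pm}(G)\ge \lambda^2 + s^{\pm}(G-v) - (\text{extreme eigenvalue of }G-v)^2 \ge n-3$. Two further, more minor, points: your reduction $s(G)\ge\sum_i s(H_i)$ from the two inequalities of Theorem~\ref{thm:square_energy_partition} should be stated (it uses $\min\{\sum a_i,\sum b_i\}\ge\sum\min\{a_i,b_i\}$, which is fine), and even outside the star-like case your requirement that \emph{every} component of $G-V(H)$ have order at least $4$ is stronger than needed and harder to guarantee than the paper's version, which only splits off one connected piece of order between $4$ and $6$ and keeps the (connected) remainder intact.
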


\begin{proof}
For $n\le 10$, one can use computer to verify the stronger claim that $s(G)\ge n-1$. So, assume $n\ge 11$. We proceed by induction on $n$. 

The assertion is true if $G$ is a bipartite graph or a cycle (see \cite{Aida_2023}). So, assume $G$ is not bipartite and has maximum degree $\Delta \ge 3$. Let $T$ be a spanning tree of $G$ rooted at a vertex $v$, where $\deg_T(v)=\Delta$. If $\Delta=3$, we can find an edge $e$ in $T$ such that $T-e$ has two components $T_1$ and $T_2$, both of order at least 4 since $n\ge 11$. Using Theorem \ref{thm:square_energy_partition} and induction hypothesis, 
\[s(G)\ge s(G[V(T_1)])+s(G[V(T_2)])\ge \frac{3|V(T_1)|}{4} + \frac{3|V(T_2)|}{4}=\frac{3n}{4}.\] 

Now let $\Delta\ge 4$. Suppose $T-v$ has a component $C$ of order at least 4. Since $C$ and $T-V(C)$ are connected and have order at least 4, we are done by induction hypothesis. So, we may assume that every component of $T-v$ has at most 3 vertices. Hence, $T$ is a tree, as shown in Figure \ref{fig:tree_T}.

\begin{figure}
    \centering
\begin{tikzpicture}[scale=0.7, rotate=132]
\draw[thick]
 (0,0)-- (-1.9167713671725364,0.57095317320033)
 (0,0)-- (-0.8614591724298767,1.8049620755673599)
 (0,0)-- (0.8305846991792917,1.8193760077260681)
 (0,0)-- (1.914551282235562,0.5783540331753259)
 (0.8305846991792917,1.8193760077260681)-- (0.82,3.74)
 (0.8305846991792917,1.8193760077260681)-- (2.42,3.12)
 (1.914551282235562,0.5783540331753259)-- (3.84,0.58)
 (1.914551282235562,0.5783540331753259)-- (3.22,2.16)
 (0,0)-- (-1.816515682034847,-0.8368218310497613)
 (0,0)-- (-0.5896002011791943,-1.9111178934773891)
 (-1.816515682034847,-0.8368218310497613)-- (-3.98, -3.28)
 (-0.5896002011791943,-1.9111178934773891)-- (-2.58, -4.6)
 (0,0)-- (0.6507278289190346,-1.8911777527959448)
 (0,0)-- (1.8118628238009327,-0.8468489285156534)
 (0.6507278289190346,-1.8911777527959448)--(1.644359015525997,-3.1873225595702372)
 (1.8118628238009327,-0.8468489285156534)-- (2.9223360321996767,-2.0704817290471);

\draw[fill=black]
 (0,0) circle (2pt)
 (0,0.4) node {$v$}
 (-1.9167713671725364,0.57095317320033) circle (2pt)
 (-0.8614591724298767,1.8049620755673599) circle (2pt)
 (0.8305846991792917,1.8193760077260681) circle (2pt)
 (1.914551282235562,0.5783540331753259) circle (2pt)
 (0.82,3.74) circle (2pt)
 (2.42,3.12) circle (2pt)
 (3.84,0.58) circle (2pt)
 (3.22,2.16) circle (2pt)
 (-1.816515682034847,-0.8368218310497613) circle (2pt)
 (-0.5896002011791943,-1.9111178934773891) circle (2pt)
 (-3.98, -3.28) circle (2pt)
 (-2.58, -4.6) circle (2pt)
 (-2.86, -2.02) circle (2pt)
 (-1.6, -3.28) circle (2pt)
 (0.6507278289190346,-1.8911777527959448) circle (2pt)
 (1.8118628238009327,-0.8468489285156534) circle (2pt)
 (1.644359015525997,-3.1873225595702372) circle (2pt)
 (2.9223360321996767,-2.0704817290471) circle (2pt)
 (-1.2, 0.8) circle (1pt)
 (-0.95, 1.05) circle (1pt)
 (0.98,1.18) circle (1pt)
 (1.32,0.85) circle (1pt)
 (2,-1.82) circle (1pt)
 (1.58,-2.16) circle (1pt)
 (-1.9,-1.72) circle (1pt)
 (-1.48,-2.1) circle (1pt);
\end{tikzpicture}
    \caption{Spanning tree $T$}
    \label{fig:tree_T}
\end{figure}
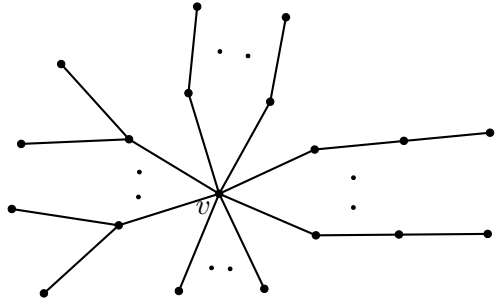

First, suppose that $G-v$ has a subgraph $H\cong P_4$. Let $e$ be an edge in $H$. Suppose $T-v+e$ has a component $C$ of order at least 4. Since $|V(C)|\le 6$, $G-V(C)$ is connected and has order at least 5. Thus, by induction hypothesis and Theorem \ref{thm:square_energy_partition}, $s(G)\ge \frac{3n}{4}$. So we may assume that every component of $T-v+e$ has order at most 3. This implies the component (say $C$) of the graph $T-v+E(H)$ (i.e., the graph $T-v$ with extra edges $E(H)$), which contains $H$ has order at most 6. Again, applying induction hypothesis on $C$ and $G-V(C)$ gives $s(G)\ge \frac{3n}{4}$. 

Now, consider the case that $G-v$ has no $P_4$ as a subgraph. Then $G-v$ is the disjoint union of some $K_3$, $P_3$, $K_2$ and $K_1$. Let the number of  $K_3$, $P_3$, $K_2$ and $K_1$ in $G-v$ be $\ell_1$, $\ell_2$, $\ell_3$ and $\ell_4$, respectively. Then $n=3\ell_1+ 3\ell_2 + 2\ell_3 + \ell_4 + 1$. Moreover, the spectrum of $G-v$ is 
\[\{ 2^{(\ell_1)}, \sqrt{2}^{(\ell_2)}, 1^{(\ell_3)}, 0^{(\ell_2+\ell_4)}, (-1)^{(2\ell_1+ \ell_3)}, (-\sqrt{2})^{(\ell_2)}\}.\]

Note that $G$ is not bipartite, so it contains an odd cycle. Since $G-v$ does not have $P_4$ as a subgraph, all odd cycles in $G$ are triangles. 

Now, if $n=11$, then we can find an edge $uw$ (where $u,w\neq v$) in $G$ such that $G-u-w$ is connected. Then, using the stronger claim for $n\le 10$, we have 
\[s(G)\ge s(K_2) + s(G-u-w)\ge n-2 \ge \frac{3n}{4}.\] 

Now assume $n\ge 12$. Note that $\min\{\lambda_1(G), |\lambda_n(G)|\}\ge \sqrt{\ell_1+\ell_2+\ell_3+\ell_4}$ since $G$ has an induced star of order $\ell_1+\ell_2+\ell_3+\ell_4+1$. Using the Interlacing Theorem,
\begin{align*}
    s^-(G) &\ge  \lambda_n^2(G) + s^-(G-v)- \lambda_{n-1}^2(G-v) \\
    & \ge (\ell_1+\ell_2+\ell_3+\ell_4) + (2\ell_1 + \ell_3 + 2\ell_2) - 2 \\
    & \ge n-3 \ge \frac{3n}{4}.
\end{align*}
This proves the assertion for $s^-(G)$. 

Now, if $\ell_1\ge 1$ then $\lambda_1(G-v)=2$. Using the Interlacing Theorem,
\begin{align*}
    s^+(G) &\ge  \lambda_1^2(G) + s^+(G-v)- \lambda_1^2(G-v) \\
    & \ge (\ell_1+\ell_2+\ell_3+\ell_4) + (4\ell_1 + 2\ell_2 + \ell_3) - 4 \\
    & = n-3 + (2\ell_1 - 2)\ge n-3 \ge \frac{3n}{4}.
\end{align*}
On the other hand, if $\ell_1=0$ then $\lambda_1(G-v)\le \sqrt{2}$. Again,
\begin{align*}
    s^+(G) &\ge  \lambda_1^2(G) + s^+(G-v)- \lambda_1^2(G-v) \\
    & \ge (\ell_2+\ell_3+\ell_4) + (2\ell_2 + \ell_3) - 2 \\
    & = n-3  \ge \frac{3n}{4}.
\end{align*}
This proves the assertion for $s^+(G)$ and the proof is complete.
\end{proof}  

\bibliographystyle{plain}
\bibliography{references}

\vspace{0.6cm}
\noindent Saieed Akbari, Email: {\tt sakbarif@sfu.ca}\\
\textsc{Dept. of Mathematical Sciences, Sharif University of Technology, Tehran, Iran}\\[2pt]

\noindent Hitesh Kumar, Email: {\tt hitesh\_kumar@sfu.ca}\\
\textsc{Dept. of Mathematics, Simon Fraser University, Burnaby, BC \ V5A 1S6, Canada}\\[2pt]

\noindent Bojan Mohar, Email: {\tt mohar@sfu.ca}\\
Supported in part by the NSERC Discovery Grant R832714 (Canada), and in part by the ERC Synergy grant KARST (European Union, ERC, KARST, project number 101071836).
On leave from FMF, Department of Mathematics, University of Ljubljana.\\
\textsc{Dept. of Mathematics, Simon Fraser University, Burnaby, BC \ V5A 1S6, Canada}\\[2pt]

\noindent Shivaramakrishna Pragada, Email: {\tt shivaramakrishna\_pragada@sfu.ca}\\
\textsc{Dept. of Mathematics, Simon Fraser University, Burnaby, BC \ V5A 1S6, Canada}

\end{document}